\theoremstyle{plain}
\newtheorem{theorem}{Theorem}
\newtheorem*{theorem*}{Theorem}
\newtheorem{lemma}[theorem]{Lemma}
\newtheorem*{lemma*}{Lemma}
\theoremstyle{definition}
\newtheorem*{definition*}{Definition}
\DeclareMathOperator{\Z}{\mathbb{Z}}
\DeclareMathOperator{\im}{\leq_{im}}
\DeclareMathOperator{\nim}{\nleq_{im}}
\DeclareMathOperator{\m}{\leq_{m}}
\title{A Note On Immersion Intertwines of Infinite Graphs}
\author{Matthew Barnes}
\address{
  Department of Mathematics\\
  Louisiana State University\\
  Baton Rouge, LA 70803\\
  USA}
\email{mbarn31@math.lsu.edu}
\author{Bogdan Oporowski}
\address{
  Department of Mathematics\\
  Louisiana State University\\
  Baton Rouge, LA 70803\\
  USA}
\email{bogdan@math.lsu.edu}
\keywords{graph, immersion, intertwine.}
\subjclass[2010]{05C63}
\begin{document}

\begin{abstract}
We present a construction of two infinite graphs $G_1$ and $G_2$, and of an infinite set $\mathscr{F}$ of graphs such that $\mathscr{F}$ is an antichain with respect to the immersion relation and, for each graph $G$ in $\mathscr{F}$, both $G_1$ and $G_2$ are subgraphs of $G$, but no graph properly immersed in $G$ admits an immersion of $G_1$ and of $G_2$.  This shows that the class of infinite graphs ordered by the immersion relation does not have the finite intertwine property.

\end{abstract}

\date{\today}

\maketitle

\begin{section}{Introduction}

A \emph{graph} $G$ is a pair $(V(G),E(G))$ where $V(G)$, the set of vertices, is an arbitrary and possibly infinite set, and $E(G)$, the set of edges, is a subset of the set of two-element subsets of $V(G)$.
In particular, this definition implies that all graphs in this paper are simple, that is, with no loops or multiple edges.
The class of finite graphs will be denoted $\mathscr{G}_{<\infty}$ and the class of graphs whose vertex set is infinite will be denoted by $\mathscr{G}_\infty$.

Let $G$ and $H$ be graphs, and let $\mathscr{P}(G)$ denote the set of all nontrivial, finite paths of $G$.
We say $H$ is \emph{immersed} in $G$ if there is a map $\varphi : V(H) \cup E(H) \rightarrow V(G) \cup \mathscr{P}(G)$, sometimes abbreviated as $\varphi : H \rightarrow G$, such that:

\begin{enumerate}
  \item if $v\in V(H)$, then $\varphi (v)\in V(G)$;
  \item if $v$ and $v'$ are distinct vertices of $H$, then $\varphi(v) \neq \varphi (v') $;
  \item if $e = \{v,v'\} \in E(H)$, then $\varphi(e) \in \mathscr{P}(G)$ and the path $\varphi (e)$ connects $\varphi(v)$ with $\varphi(v')$;
  \item if $e$ and $e'$ are distinct edges of $H$, then the paths $\varphi(e)$ and $\varphi(e')$ are edge-disjoint; and
  \item if $e=\{v,v'\}\in E(H)$ and $v''$ is a vertex of $H$ other than $v$ and $v'$, then $\varphi (v'') \notin V(\varphi(e))$.
\end{enumerate}

We call $\varphi$ an \emph{immersion} and write $H \im G$.  It is easy to prove (see~\cite{siamd}) that the relation $\im$ is transitive.
If $C$ is a subgraph of $H$, then the restriction of $\varphi$ to $V(C)\cup E(C)$ will be abbreviated by $\varphi|_C$.
If $\varphi |_{V(H)}$ is a bijection such that two vertices, $v$ and $v'$, of $H$ are adjacent if and only if their images, $\varphi(v)$ and $\varphi(v')$, are adjacent in $G$, then we say that $\varphi$ induces an isomorphism between $H$ and $G$; otherwise $\varphi$ is \emph{proper}.
If $H = G$, then $\varphi$ is a \emph{self-immersion}, and, if additionally, it induces the identity map, then it is \emph{trivial}.
It is worth noting that immersion, as defined above, is sometimes called \emph{strong immersion}.

Let $S$ be a possibly infinite set of pairwise edge-disjoint paths in a graph $G$.
We say that $S$ is \emph{liftable} if no end-vertex of path in $S$ is an internal vertex of another path in $S$.
The operation of \emph{lifting} $S$ consists of deleting all internal vertices of all paths in $S$, and adding edges joining every pair of non-adjacent vertices of $G$ that are end-vertices of the same path in $S$.
It is easy to see that a graph $H$ is immersed in $G$ if and only if $H$ is isomorphic to a graph obtained from $G$ by deleting a set $V$ of vertices, deleting a set $E$ of edges, and then lifting a liftable set $S$ of paths.  
Furthermore, a self-immersion of $G$ is proper if and only if at least one of the sets $V$, $E$, and $S$ is nonempty.  

Given a graph $G$, a {\em blob} is a maximal 2-edge-connected subgraph of $G$.  Note that if a graph is 2-edge-connected, the graph itself is also a blob.  
An easy lemma about the immersion relation can be stated as follows.
\begin{lemma}
\label{immersionblob}
Let $H\im G$ via the immersion $\varphi$ and let $C$ be a blob of $H$.  Then there is a blob $D$ of $G$ such that $C\im D$ via the immersion $\varphi|_C$.
\end{lemma}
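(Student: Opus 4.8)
The plan is to show that the union of the images of the edges of $C$ is already a $2$-edge-connected subgraph of $G$, hence is contained in some blob $D$, and then to observe that $\varphi|_C$, being an immersion into this union, is in particular an immersion into $D$.

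First I would set $G' \subseteq G$ to be the subgraph whose edge set is $\bigcup_{e\in E(C)} E(\varphi(e))$ and whose vertex set is $\bigcup_{e\in E(C)} V(\varphi(e))$. (If $C$ has no edges it is a single vertex, which lies in some blob of $G$ trivially, so I may assume $E(C)\neq\emptyset$, and then $C$ is $2$-edge-connected in the nondegenerate sense.) By construction $\varphi|_C$ is an immersion of $C$ into $G'$. Note that every vertex of $G'$ either lies in $\varphi(V(C))$ or is internal to some path $\varphi(e)$ with $e=\{v,v'\}$, and in the latter case it is joined in $G'$, along a subpath of $\varphi(e)$, to each of $\varphi(v)$ and $\varphi(v')$.

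Next I would verify that $G'$ is $2$-edge-connected. Fix an edge $f$ of $G'$ and choose $e=\{v,v'\}\in E(C)$ with $f\in E(\varphi(e))$. Since $C$ is $2$-edge-connected, $C-e$ is connected, so any two vertices $a,b$ of $C$ are joined by a path $Q$ in $C-e$; the concatenation of the paths $\varphi(e_i)$ over the edges $e_i$ of $Q$ is a walk from $\varphi(a)$ to $\varphi(b)$ in $G'$, and because $e\notin E(Q)$ and the paths $\varphi(e_i)$ are edge-disjoint from $\varphi(e)$ by condition~(4), this walk avoids $f$. Hence all of $\varphi(V(C))$ lies in one component of $G'-f$. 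Any remaining vertex of $G'$ is internal to some path $\varphi(e')$, and since $f$ is a single edge, at least one of the two subpaths of $\varphi(e')$ from that vertex to an endpoint of $\varphi(e')$ avoids $f$, placing that vertex in the same component of $G'-f$. Thus $G'-f$ is connected for every edge $f$, and since $G'$ is $G'-f$ together with one more edge it is connected; so $G'$ is $2$-edge-connected.

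Finally, a $2$-edge-connected subgraph of $G$ is contained in a maximal one, i.e.\ in a blob $D$ of $G$. Each $\varphi(v)$ with $v\in V(C)$ lies in $V(G')\subseteq V(D)$, and each path $\varphi(e)$ with $e\in E(C)$ uses only vertices and edges of $G'\subseteq D$, so it belongs to $\mathscr{P}(D)$; conditions~(1)--(5) for $\varphi|_C$ as a map $C\to D$ are then identical to those already known for $\varphi$ restricted to $C$. Hence $C\im D$ via $\varphi|_C$. The main obstacle I anticipate is the bookkeeping inside the $2$-edge-connectivity argument—verifying that the walk built from $Q$ truly avoids $f$ and correctly treating the vertices internal to the image paths—together with stating the degenerate cases ($C$ a single vertex) in accordance with whatever convention for ``$2$-edge-connected'' is adopted.
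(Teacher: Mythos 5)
Your proof is correct. The paper itself states this lemma without proof (it is introduced only as ``an easy lemma''), so there is no argument in the paper to compare against; your route --- showing that the union $G'$ of the image paths $\varphi(e)$, $e\in E(C)$, is $2$-edge-connected because $C-e$ stays connected and the image paths are pairwise edge-disjoint, and then passing to a maximal $2$-edge-connected subgraph $D\supseteq G'$ --- is the natural one, and your handling of the internal vertices of the image paths and of the single-vertex case is sound.
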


A pair $(\mathscr{G}, \leq)$, where $\mathscr{G}$ is a class of graphs and $\leq$ is a binary relation on $\mathscr{G}$, is called a \emph{quasi-order} if the relation $\leq$ is both reflexive and transitive.
A quasi-order $(\mathscr{G}, \leq)$ is a \emph{well-quasi-order} if it admits no infinite antichains and no infinite descending chains.

Suppose $(\mathscr{G}, \leq)$ is a quasi-order and $G_1$ and $G_2$ are two elements of $\mathscr{G}$.
An \emph{intertwine} of $G_1$ and $G_2$ is an element $G$ of $\mathscr{G}$ satisfying the following conditions:

\begin{itemize}
  \item $G_1 \leq G$ and $G_2 \leq G$, and
  \item if $G'\leq G$ and $G \nleq G'$, then $G_1 \nleq G$ or $G_2 \nleq G$.
\end{itemize}

The class of all intertwines of $G_1$ and $G_2$ is denoted by $\mathscr{I}_{\leq}(G_1,G_2)$.
A quasi-order $(\mathscr{G},\leq)$ satisfies the \emph{finite intertwine property} if for every pair $G_1$ and $G_2$ of elements of $\mathscr{G}$, the class of intertwines $\mathscr{I}_{\leq}(G_1, G_2)$ has no infinite antichains.
It is clear that if $(\mathscr{G},\leq)$ is a well-quasi-order, then it also satisfies the finite intertwine property.
However, it is well known that the converse is not true; for example, see~\cite{anoioig}.

Nash-Williams conjectured, and Robertson and Seymour later proved~\cite{gm23nwic} that
$(\mathscr{G}_{<\infty}, \im)$ is a well-quasi-order, and so it follows that
$(\mathscr{G}_{<\infty}, \im)$ satisfies the finite intertwine property.
In~\cite{anoioig}, the second author showed that $(\mathscr{G}_\infty , \m)$, where $\m$ denotes the minor relation, does not satisfy the finite intertwine property.
Andreae showed \cite{oioug} that $(\mathscr{G}_\infty , \im)$ is not a well-quasi-order.
In a result analogous to \cite{anoioig}, we strengthen Andreae's result by showing that $(\mathscr{G}_\infty , \im)$ does not satisfy the finite intertwine property.
In particular, we construct two graphs $G_1$ and $G_2$, and an infinite class $\mathscr{F}$ in $\mathscr{G}_\infty $ such that:

\begin{enumerate}
\item[(IT1)] $\mathscr{F}$ is an immersion antichain;
\item[(IT2)] every graph in $\mathscr{F}$ is connected;
\item[(IT3)] both $G_1$ and $G_2$ are subgraphs of each graph in $\mathscr{F}$;
\item[(IT4)] if $G'$ is properly immersed in a graph $G$ in $\mathscr{F}$, then $G_1 \nim G'$ or $G_2 \nim G'$.
\end{enumerate}

Note that~(IT3) implies that $G_1$ and $G_2$ are immersed in $G$.
Hence, the existence of graphs $G_1$, $G_2$ and a class of graphs $\mathscr{F}$ satisfying~(IT1)--(IT4) implies the following statement, which is the main result of the paper.

\begin{theorem}
\label{mainthm}
The quasi-order $(\mathscr{G}_{\infty}, \im)$ does not satisfy the finite intertwine property.
\end{theorem}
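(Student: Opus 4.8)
The statement follows, as observed above, once one produces graphs $G_1,G_2\in\mathscr{G}_\infty$ and an infinite family $\mathscr{F}=\{F_n : n\ge 3\}\subseteq\mathscr{G}_\infty$ satisfying (IT1)--(IT4), so the plan is to construct such objects and verify the four conditions. For $G_1$ and $G_2$ I would use infinite graphs whose salient features are \emph{rigid} under immersion: a vertex of infinite degree (whose image under any immersion again has infinite degree, since the images of the edges incident with it are edge-disjoint paths issuing from a common vertex) and a double ray. Concretely I would take $G_1$ and $G_2$ to be incomparable, say $G_1$ having a vertex of infinite degree but no double ray while $G_2$ has bounded degree but contains a double ray, so that $G_1\nim G_2$ and $G_2\nim G_1$. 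Each $F_n$ would then be a connected graph obtained by gluing a copy of $G_1$ and a copy of $G_2$ onto a common finite ``core'' $K_n$ whose combinatorial type encodes the integer $n$; the core would carry a fixed highly-connected blob (a $3$-edge-connected piece, which cannot be produced from a smaller one by lifting) together with vertices of prescribed finite degrees, and the infinite-degree vertex of the $G_1$-part and the double ray of the $G_2$-part would be ``pinned'' to $K_n$ so that neither can be separated from the core cheaply.

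Conditions (IT2) and (IT3) are then immediate: $F_n$ is connected because $G_1$, $G_2$ and $K_n$ are connected and pairwise share vertices, and $G_1,G_2$ are designated subgraphs of $F_n$, hence immersed in $F_n$. For (IT1), given $F_m\im F_n$ via $\varphi$, Lemma~\ref{immersionblob} forces the blob of $F_m$ lying in $K_m$ to be mapped by $\varphi$ into a blob of $F_n$; since the blobs of $F_n$ are, up to immersion, only those of $K_n$ and of the copies of $G_1,G_2$, and since an immersion never decreases the degree of a vertex, the rigidity features of $K_n$ force $\varphi$ to carry the core of $F_m$ isomorphically onto that of $F_n$, whence $m=n$. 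This part is routine once $K_n$ is designed with the right immersion invariants, and is close in spirit to Andreae's antichain from~\cite{oioug}.

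The crux is (IT4). Let $G'$ be properly immersed in $F_n$. By the characterization recalled in the introduction, $G'$ is isomorphic to a graph obtained from $F_n$ by deleting a set $V$ of vertices, deleting a set $E$ of edges, and lifting a liftable set $S$ of paths, with $V\cup E\cup S\ne\varnothing$; the plan is to show that in every case $G'$ fails to immerse one of $G_1,G_2$, split according to which operation is performed and where:
\begin{itemize}
  \item an operation that removes the infinite-degree vertex of the $G_1$-part (deleting it, or lifting a path of which it is an internal vertex), or that deletes cofinitely many of its incident edges, leaves no infinite-degree vertex in the relevant component, and a short analysis of where else one could come from gives $G_1\nim G'$;
  \item an operation that severs the double ray of the $G_2$-part (deleting a vertex or an edge of it, or lifting a path running through its joint with the core) leaves only rays there, and one checks that no other component of $G'$ can host a double ray, so $G_2\nim G'$;
  \item any remaining operation must alter the core $K_n$; by Lemma~\ref{immersionblob} and the degree bookkeeping it can only shrink the rigid blob or lower one of the prescribed degrees, and then the copies of $G_1$ and $G_2$ no longer fit into $G'$ simultaneously, so $G_1\nim G'$ or $G_2\nim G'$;
  \item operations affecting only the ``padding'' (deleting a pendant leaf, lifting a sub-path inside a ray) leave $G'$ isomorphic to $F_n$, hence immersion-equivalent to it, so they do not give a \emph{proper} immersion and need not be considered.
\end{itemize}

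The main obstacle is the third bullet: one must choose the construction so that it has \emph{no slack}, the real danger being a clever $G'\pim F_n$ that reroutes lifted paths through the core, or that splits off a component still rich enough to immerse both $G_1$ and $G_2$. Precluding this is exactly the role of the rigidity features built into $K_n$ (the $3$-edge-connected blob and the prescribed finite degrees) together with the pinning of the infinite-degree vertex and of the double ray to the core. Once the construction is arranged so that these obstructions cannot be circumvented, the case analysis closes, establishing (IT1)--(IT4) and hence Theorem~\ref{mainthm}.
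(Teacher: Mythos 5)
Your high-level plan (produce $G_1$, $G_2$ and an infinite family satisfying (IT1)--(IT4)) matches the paper's, but the construction you propose cannot be made to work, for two concrete reasons.

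First, (IT1) fails for any family whose members are distinguished only by finite cores $K_n$. By the Robertson--Seymour theorem cited in the introduction, finite graphs are well-quasi-ordered under strong immersion, so the family $\{K_n\}$ necessarily contains infinitely many comparable pairs $K_m \im K_n$; since every $F_n$ carries the \emph{same} infinite attachments $G_1$ and $G_2$, there is no invariant left to obstruct extending such an immersion to $F_m \im F_n$. Your claim that ``the rigidity features of $K_n$ force $\varphi$ to carry the core of $F_m$ isomorphically onto that of $F_n$'' has no finite witness available: a $3$-edge-connected finite blob and prescribed finite degrees are exactly the kind of data that the well-quasi-ordering makes comparable infinitely often. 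This is why the paper must invoke Andreae's Theorem~\ref{thm1}, which supplies an infinite immersion antichain of pairwise-disjoint \emph{infinite} blobs --- an object whose existence is a deep result (the vertex sets have cardinality above the continuum) and not something replaceable by finite combinatorial types.

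Second, and independently, (IT4) fails because your $F_n$ has slack everywhere. Delete a single edge incident with the infinite-degree vertex, or a single edge of the double ray far from the core: the resulting $G'$ still contains isomorphic copies of $G_1$ and $G_2$ as subgraphs (an infinite-degree vertex stays infinite-degree after losing one edge; a double ray minus an interior edge still contains a double ray elsewhere unless you have pinned \emph{every} double ray, which your invariants do not do), so to save (IT4) you would need $F_n \im G'$, i.e., that $F_n$ immerses back into every such reduction. Controlling this is precisely the ``no proper self-immersion'' property, and it is the content of Andreae's Theorem~\ref{thm2}; graphs built from an infinite-degree vertex and a double ray generically admit many proper self-immersions. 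The paper's argument for (IT4) leans on exactly the two ingredients your sketch omits: each attached piece $B_i$ admits no proper self-immersion, and the blobs of distinct pieces form an immersion antichain, so (by Lemma~\ref{immersionblob}) any nontrivial interference with some $B_i$ destroys the immersion of $B_i$ into the reduced graph, while interference confined to the spine path disconnects the graph in a way that the antichain property makes fatal. Without substitutes for these two facts, the third and fourth bullets of your case analysis cannot be closed.
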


\end{section}
\begin{section}{The Construction}

We will exhibit two graphs $G_1$ and $G_2$ in $\mathscr{G}_{\infty}$ such that $\mathscr{I}_{\im}(G_1, G_2)$ is infinite.  The construction of $G_1$ and $G_2$ begins with the following results, which are immediate consequences of, respectively, Lemmas 3 and 4, and Lemmas 1 and 2 of \cite{osioig}.

\begin{theorem}
\label{thm1}
There is an infinite set $\mathscr{H}$ of pairwise-disjoint infinite blobs such that $|H| \leq |\mathscr{H}|$ for all $H\in \mathscr{H}$, and $\mathscr{H}$ forms an immersion antichain.
\end{theorem}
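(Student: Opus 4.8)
The statement bundles four requirements on the family \(\mathscr H\): its members should be infinite, pairwise disjoint, 2-edge-connected (hence blobs), and individually no larger than \(\mathscr H\) itself, and \(\mathscr H\) should be an immersion antichain. Three of these are soft. Pairwise disjointness is arranged by replacing each graph with an isomorphic copy on a fresh vertex set. The cardinality clause \(|H|\le|\mathscr H|\) is arranged by working with a countably infinite family of countable graphs, so that \(|H|\le\aleph_0=|\mathscr H|\). So the real content is to produce an infinite immersion antichain consisting of infinite \emph{2-edge-connected} graphs, and the plan is to extract this from~\cite{osioig}.

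Absent that reference, I would start from Andreae's construction in~\cite{oioug}, which furnishes an infinite immersion antichain \(\mathscr A\subseteq\mathscr G_\infty\), and then repair connectivity. To each \(A\in\mathscr A\) I would attach a small, rigid auxiliary structure --- for instance, subdivide every edge of \(A\) once and route two edge-disjoint auxiliary spanning paths through all the resulting vertices --- so that the enlarged graph \(\widehat A\) becomes 2-edge-connected while a copy of \(A\) is still recoverable from \(\widehat A\) by deleting and lifting only the auxiliary material. The task is then to show that \(\widehat{\mathscr A}=\{\widehat A:A\in\mathscr A\}\) remains an antichain: from an immersion \(\widehat A\im\widehat B\) one must read off an immersion \(A\im B\), so that incomparability, as well as the absence of a proper self-immersion, transfers from \(\mathscr A\) to \(\widehat{\mathscr A}\). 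Lemma~\ref{immersionblob} is a first foothold here, as it confines blobs to blobs under immersion, but it does not by itself finish the argument.

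The main obstacle is exactly this last step. In the immersion order, the auxiliary edges we added to achieve 2-edge-connectivity also hand an immersion a great deal of freedom to reroute the edge-disjoint paths it must produce, and one has to rule out the possibility that \(\widehat A\im\widehat B\) is witnessed by some clever routing through the added edges that has no counterpart inside \(B\). Controlling that freedom --- choosing the gadget rigidly enough, and proving it cannot be exploited --- is the technical heart of the matter, and it is precisely what Lemmas~1--4 of~\cite{osioig} are engineered to supply: there the relevant infinite graphs are already constructed 2-edge-connected, and the antichain property is established directly for them. Granting those lemmas, together with the cardinality normalisation above, every clause of the present statement follows at once.
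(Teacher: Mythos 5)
The paper offers no proof of this theorem at all: it is stated as an immediate consequence of Lemmas~3 and~4 of~\cite{osioig}, so your ultimate decision to defer the real content to those lemmas is exactly what the authors do. To that extent your proposal is consistent with the paper. The trouble lies in the two places where you try to do more than cite.

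First, your dismissal of the cardinality clause as ``soft'' is wrong. You propose to satisfy $|H|\le|\mathscr H|$ by ``working with a countably infinite family of countable graphs,'' but no infinite immersion antichain of countable (or even continuum-sized) 2-edge-connected graphs is known to exist; the paper explicitly remarks that the graphs in question have vertex sets of cardinality equal to the first limit cardinal greater than the continuum, and that the behaviour of the immersion order on smaller infinite graphs is open. Since each member $H$ is necessarily enormous, the clause $|H|\le|\mathscr H|$ is a genuine demand that the \emph{antichain itself} contain at least that many pairwise-incomparable graphs --- it must be met by making $\mathscr H$ large, not by making the graphs small, and this is part of what Lemmas~3 and~4 of~\cite{osioig} are needed to deliver. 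Second, in your fallback construction (attaching a rigid gadget to each member of Andreae's antichain from~\cite{oioug} to force 2-edge-connectivity), you candidly identify the step that matters --- showing $\widehat A\im\widehat B$ forces $A\im B$ --- and then do not carry it out. As you note, Lemma~\ref{immersionblob} does not close this, since the whole of $\widehat A$ is a single blob mapping into the single blob $\widehat B$, which gives no leverage. So the sketch is not a proof; the statement stands or falls with the cited lemmas of~\cite{osioig}, exactly as in the paper.
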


\begin{theorem}
\label{thm2}
Given an immersion antichain $\mathscr{H}$ of pairwise-disjoint infinite blobs such that $|H| \leq |\mathscr{H}|$ for all $H\in \mathscr{H}$, there is a connected graph $G$ such that the set of blobs of $G$ is $\mathscr{H}$ and $G$ admits no self-immersion except for the trivial one.
\end{theorem}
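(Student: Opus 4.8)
The plan is to build $G$ by joining the blobs of $\mathscr{H}$ to one another by single edges, arranged so that the blobs of $G$ are again exactly $\mathscr{H}$ while every vertex of every blob is an end of at least one joining edge; the rigidity of $G$ will then follow, because each joining edge is tied to a definite vertex of each of the two blobs it connects and the blobs of $\mathscr{H}$ are pairwise incomparable under immersion. Concretely, I would first fix a tree $\Gamma$ with vertex set $\mathscr{H}$ in which every vertex has degree $|\mathscr{H}|$; for instance, the rooted tree of height $\omega$ in which every vertex has exactly $|\mathscr{H}|$ children, which has $|\mathscr{H}|$ nodes. For each blob $H$ there are $|\mathscr{H}|$ edges of $\Gamma$ incident with $H$, and $|\mathscr{H}| \ge |V(H)|$ by hypothesis, so I may fix a surjection $\lambda_H$ from the set of those edges onto $V(H)$. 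Then let $G$ be obtained from the disjoint union of the blobs in $\mathscr{H}$ by adding, for every edge $f$ of $\Gamma$ joining blobs $H$ and $H'$, the single edge $\{\lambda_H(f), \lambda_{H'}(f)\}$. No new vertices appear, so $V(G) = \bigcup_{H \in \mathscr{H}} V(H)$, and by the choice of the maps $\lambda_H$, every vertex of every blob is an end of at least one added edge.

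The structural claims are immediate. Since $\Gamma$ is a tree, every added edge is a bridge of $G$; a bridge lies in no $2$-edge-connected subgraph, and $G$ with all added edges removed is the disjoint union of the blobs, so every $2$-edge-connected subgraph of $G$ lies inside a single $H \in \mathscr{H}$. Hence each $H$ is a maximal $2$-edge-connected subgraph of $G$ and these are the only ones, so the blobs of $G$ are exactly $\mathscr{H}$; and $G$ is connected because $\Gamma$ is.

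The heart of the matter is to show that $G$ has no self-immersion other than the trivial one. Let $\varphi$ be a self-immersion of $G$. Applying Lemma~\ref{immersionblob} to each blob $H$, the map $\varphi|_H$ immerses $H$ into some blob $D$ of $G$; since $H$ and $D$ lie in the antichain $\mathscr{H}$, this forces $D = H$, so $\varphi(V(H)) \subseteq V(H)$ for every blob $H$. Next I record a consequence of the last condition in the definition of immersion: for every edge $e$ of $G$, no internal vertex of the path $\varphi(e)$ is of the form $\varphi(x)$ for a vertex $x$ of $G$. Indeed, an internal vertex of $\varphi(e)$ differs from its two ends, which are the images of the ends of $e$, so, since $\varphi$ is injective, it could only be the image of a third vertex, which the last condition prohibits. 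Now suppose, for a contradiction, that $\varphi(u) \ne u$ for some vertex $u$, lying in a blob $H$, and put $u_2 := \varphi(u) \in V(H)$. Pick an added edge $e_2$ incident with $u_2$; it joins $u_2$ to a vertex $u_2'$ of a blob $D \ne H$. Deleting the bridge $e_2$ separates $V(H)$ from $V(D)$ in $G$, and $e_2$ is the only edge of $G$ between the two sides, so the path $\varphi(e_2)$, which runs from $\varphi(u_2) \in V(H)$ to $\varphi(u_2') \in V(D)$, crosses between the two sides exactly once, along $e_2$; and since $\Gamma$ is a tree, once a path of $G$ leaves $H$ through a bridge it cannot return to $H$, so the portion of $\varphi(e_2)$ on the $H$-side is a path inside $H$ from $\varphi(u_2)$ to $u_2$. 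As $\varphi(u_2) \ne u_2$ (otherwise $\varphi(u) = \varphi(u_2)$ with $u \ne u_2$) and $\varphi(u_2') \ne u_2$ (they lie in different blobs), the vertex $u_2$ is an internal vertex of $\varphi(e_2)$; but $u_2 = \varphi(u)$, contradicting the consequence recorded above. Therefore $\varphi$ fixes every vertex of $G$, so $\varphi$ restricted to $V(G)$ is the identity map; hence $\varphi$ induces the identity, i.e.\ $\varphi$ is trivial.

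The step I expect to be the main obstacle is the infinite-cardinal bookkeeping in the construction (the choice of $\Gamma$ and of the surjections $\lambda_H$, which is exactly where the hypothesis $|H| \le |\mathscr{H}|$ is needed), together with the short but slightly delicate verification that a path of $G$ cannot leave and re-enter a blob, which is where the tree structure of $\Gamma$ is used. I would emphasize that the rigidity argument nowhere assumes rigidity of the individual blobs in $\mathscr{H}$ (which need not hold); it uses only their pairwise incomparability under immersion and the fact that each of their vertices is anchored by an added edge.
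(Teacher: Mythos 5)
Your proof is correct. Note that the paper does not actually prove this theorem: it derives it as an ``immediate consequence'' of Lemmas~1 and~2 of Andreae's \emph{On self-immersions of infinite graphs}, so there is no in-paper argument to compare against. Your construction is, in essence, a self-contained version of that approach: join the blobs by a tree of bridges arranged so that every vertex of every blob is an end of at least one bridge; use the antichain property together with Lemma~\ref{immersionblob} to force each blob to map into itself under any self-immersion $\varphi$; and, if $\varphi(u)=u_2\neq u$, use a bridge $e_2$ incident with $u_2$ to violate condition~(5) of the definition of immersion, since $\varphi(e_2)$ must cross the bridge $e_2$ and hence pass through $u_2=\varphi(u)$ as a non-endpoint. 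The details all check out: the hypothesis $|H|\leq|\mathscr{H}|$ is exactly what makes the surjections $\lambda_H$ available; the added edges are bridges because contracting each blob yields the tree $\Gamma$, so every $2$-edge-connected subgraph of $G$ lies in a single $H$; and a path uses each bridge at most once, which justifies both that $\varphi(e_2)$ passes through $u_2$ and that its $H$-portion stays in $H$ (though only the former is needed). One small remark: your argument shows only that $\varphi|_{V(G)}$ is the identity, but under the paper's definition of \emph{trivial} (inducing the identity map) that is precisely what is required; in fact, condition~(5) combined with the surjectivity of $\varphi|_{V(G)}$ then forces $\varphi(e)=e$ for every edge as well, so $\varphi$ is literally the identity.
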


Let $\mathscr{H}$ be an antichain as described in Theorem~\ref{thm1}.
Partition $\mathscr{H}$ into countably many sets $\{\mathscr{H}_i\}_{i\in \Z}$ with the cardinality of each $\mathscr{H}_i$ equal to $|\mathscr{H}|$.
Then, by Theorem~\ref{thm2}, for each $i\in \Z$, there is a connected graph $B_i$ whose set of blobs is $\mathscr{H}_i$, and that admits no proper self-immersion.
Furthermore, Lemma \ref{immersionblob} implies that if $i$ and $j$ are distinct integers, then $B_i \nim B_j$, as no blob of $B_i$ is immersed in a blob of $B_j$.  Therefore, the set of graphs $\{B_i\}_{i\in \mathbb{Z}}$ is an immersion antichain.

For each graph $B_i$, label one vertex $u_i$.
Let $P$ be a two-way infinite path with vertices labeled $\{v_i\}_{i\in \Z}$ such that, for each integer $i$, the vertex $v_i$ is adjacent to $v_{i+1}$ and $v_{i-1}$.
We construct the graph $G_1$ by taking the disjoint union of $P$ and the graphs $B_i$ for which $i$ is odd, and then identifying the vertices $u_i$ and $v_j$ for $i=j$.
Similarly, we construct the graph $G_2$ by taking the disjoint union of $P$ and the graphs $B_i$ for which $i$ is even, and then identifying the vertices $u_i$ and $v_j$ for $i=j$.

Now let $j$ be an integer.  
Take the disjoint union of $G_1$ and all the graphs $B_i$ for which $i$ is even.  
Then, for each even integer $i$, identify the vertex $v_i$ of $G_1$ with the vertex $u_{i+2j}$ of the graph $B_{i+2j}$.  
Let $F_j$ be the resulting graph (see Figure~\ref{f1}) and define $\mathscr{F}$ as the set $\{F_j\}_{j\in \mathbb{Z}}$.

\end{section}

\begin{figure}[H]
\begin{center}
\begin{tikzpicture}[yscale=.8,xscale=1.2]
\draw[thick] (1.03,0) --(9,0);
\draw[thick][dotted] (.8,0) --(1,0);
\draw[thick][dotted] (9,0) --(9.3,0);
\draw[very thick, pattern=north west lines] (2,1) ellipse (.5 and 1);
\draw[very thick, pattern=north west lines] (4,1) ellipse (.5 and 1);
\draw[very thick, pattern=north west lines] (6,1) ellipse (.5 and 1);
\draw[very thick, pattern=north west lines] (8,1) ellipse (.5 and 1);
\node [above] at (2,2) {$B_i$};
\node [above] at (6,2) {$B_{i+2}$};
\node [above] at (4,2) {$B_{i+1+2j}$};
\node [above] at (8,2) {$B_{i+3+2j}$};
\node [below] at (2,-.20) {$v_i$};
\node [below] at (6,-.20) {$v_{i+2}$};
\node [below] at (4,-.20) {$v_{i+1}$};
\node [below] at (8,-.20) {$v_{i+3}$};
\node [left] at (.5,.25) {$P$};
\draw[fill] (2,0) circle [radius=0.1];
\draw[fill] (4,0) circle [radius=0.1];
\draw[fill] (6,0) circle [radius=0.1];
\draw[fill] (8,0) circle [radius=0.1];
\end{tikzpicture}
\caption{The graph $F_j$}
\label{f1}
\end{center}
\end{figure}
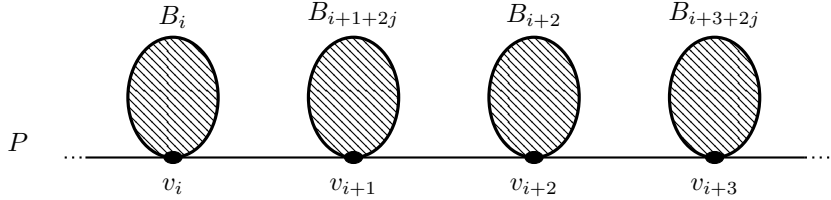

The following lemma immediately implies our main result, Theorem~\ref{mainthm}.

\begin{lemma}
The set of graphs $\mathscr{F}=\{F_j\}_{j\in \mathbb{Z}}$ is an immersion antichain.  Furthermore, each $F_j \in \mathscr{F}$ is an immersion intertwine of the graphs $G_1$ and $G_2$.
\end{lemma}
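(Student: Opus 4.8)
The plan is to verify three facts: (a) $G_1 \im F_j$ and $G_2 \im F_j$ for every $j$; (b) $F_j \nim F_k$ whenever $j \neq k$; and (c) if $G' \pim F_j$ then $G_1 \nim G'$ or $G_2 \nim G'$. Fact~(a) shows that each $F_j$ meets the first condition in the definition of an intertwine, while facts~(b) and~(c) give, respectively, that $\mathscr{F}$ is an antichain and that each $F_j$ meets the second condition. Fact~(a) is immediate: $G_1$ is a subgraph of $F_j$ by construction, and the shift $v_i \mapsto v_{i-2j}$ of $P$, extended by the canonical isomorphism on each even-indexed $B_i$, embeds $G_2$ isomorphically onto a subgraph of $F_j$ — the pendant $B_i$ attached at the even vertex $v_i$ of $G_2$ is carried onto the pendant of $F_j$ attached at $v_{i-2j}$, which is a copy of $B_i$ since $i-2j$ is even.

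The engine for~(b) and~(c) will be a rigidity statement for immersions between graphs of the shape ``a two-way infinite path $P$ with, at each vertex $v_p$, a pendant copy of some $B_{\sigma(p)}$ attached at $u_{\sigma(p)}$'', where $\sigma$ is injective; the graphs $G_1$, $G_2$ and all the $F_j$ have this shape. Given such an immersion $\varphi$, I would argue as follows. By Lemma~\ref{immersionblob} together with the facts that the $B_i$ form an immersion antichain and that $\mathscr{H}=\bigcup_i \mathscr{H}_i$ is an immersion antichain of blobs, $\varphi$ must carry each blob $C\in\mathscr{H}_i$ of the domain into the unique copy of $C$ in the codomain, and hence — since each pendant meets the rest of its host only in its attachment vertex, so may be entered and left only through that vertex — must carry the whole pendant $B_i$ of the domain into the pendant $B_i$ of the codomain. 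Because $B_i$ admits no proper self-immersion, $\varphi$ then restricts on each pendant to the canonical isomorphism, and in particular fixes each attachment vertex. Finally, using that in such a graph the only path joining two vertices of $P$ is a sub-path of $P$, and that $\varphi$ maps distinct edges to edge-disjoint paths, one deduces that $\varphi$ sends $v_p$ to the vertex at which $B_{\sigma(p)}$ is attached in the codomain, and sends the $P$-edge $v_pv_{p+1}$ onto a path through all the $P$-edges of the codomain lying between $\varphi(v_p)$ and $\varphi(v_{p+1})$.

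Granting this, fact~(b) is short. Suppose $\varphi\colon F_j\im F_k$; without loss of generality $d:=j-k\ge 1$ (the case $j<k$ being analogous). In $F_j$ we have $v_p=u_p$ for odd $p$ and $v_p=u_{p+2j}$ for even $p$, while in $F_k$ the pendant $B_i$ is attached at $v_i$ for odd $i$ and at $v_{i-2k}$ for even $i$; the rigidity statement therefore gives $\varphi(v_p)=v_p$ for odd $p$ and $\varphi(v_p)=v_{p+2d}$ for even $p$. Hence $\varphi$ maps the $P$-edge $v_pv_{p+1}$ of $F_j$ onto a path of $F_k$ through the $P$-edges from $v_p$ to $v_{p+1+2d}$ when $p$ is odd, and from $v_{p+1}$ to $v_{p+2d}$ when $p$ is even. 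In particular, for any odd $m$, the images of the distinct edges $v_{m-2}v_{m-1}$ and $v_mv_{m+1}$ of $F_j$ both contain the edge $v_mv_{m+1}$ of $F_k$ (this is where $d\ge 1$ is used), contradicting edge-disjointness. Thus $F_j\nim F_k$. For fact~(c): suppose $G'\pim F_j$ via an immersion $\psi$ and, for contradiction, $\varphi_1\colon G_1\im G'$ and $\varphi_2\colon G_2\im G'$. Composition yields immersions $\Phi_1\colon G_1\to F_j$ and $\Phi_2\colon G_2\to F_j$ such that every vertex and every edge of $F_j$ used by $\Phi_1$ or $\Phi_2$ is used by $\psi$. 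By the rigidity statement $\Phi_1$ restricts to the canonical isomorphism on every odd-indexed pendant of $F_j$ and covers every $P$-edge together with every odd-indexed vertex of $P$, so its image contains every vertex and every edge of the subgraph $G_1\subseteq F_j$ apart, possibly, from some even-indexed vertices of $P$; symmetrically, $\Phi_2$ covers all the even-indexed pendants and all the even-indexed vertices of $P$. Since every vertex of $F_j$ lies on $P$ or in a pendant, and every pendant is odd- or even-indexed, $\psi$ maps $V(G')$ onto $V(F_j)$ and the paths $\psi(e)$, $e\in E(G')$, together use every edge of $F_j$. As $\psi$ is injective on vertices it is now a vertex-bijection; hence no $\psi(e)$ can have an internal vertex (it would equal $\psi(c)$ for a third vertex $c$ of $G'$, violating clause~(5) of the definition of immersion), so each $\psi(e)$ is a single edge and, all edges of $F_j$ being covered, $\psi$ induces an isomorphism $G'\cong F_j$ — contradicting $G'\pim F_j$. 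Therefore $G_1\nim G'$ or $G_2\nim G'$.

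I expect the delicate point to be the confinement step within the rigidity statement. Lemma~\ref{immersionblob} and the antichain property pin down the images of the blobs at once, but showing that the images of the remaining vertices and edges of a pendant $B_i$ cannot slip out through its attachment vertex and back — so that $\varphi$ really maps the pendant into the pendant — is where the particular structure of the $B_i$ coming from Theorem~\ref{thm2} (each attached to the rest of its host along a single cut-vertex, and admitting no proper self-immersion) must be exploited, and this will need to be argued carefully.
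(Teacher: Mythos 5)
Your superstructure is sound modulo one unproven claim, but that claim is the load-bearing wall of the whole argument, and you have not supplied it. Everything in your facts (b) and (c) is deduced from the rigidity statement, and the rigidity statement in turn rests on the confinement step: that the image of a pendant $B_i$ lies entirely inside the pendant $B_i$ of the codomain. Lemma~\ref{immersionblob} plus the antichain property only confines the \emph{blobs} of $B_i$; it says nothing about vertices of $B_i$ that lie in no blob (cut vertices or tree-like pieces joining the blobs in Andreae's construction, whose internal structure you are treating as a black box), nor about the images of bridge edges incident with such vertices. A bridge path of $B_i$ could in principle exit the pendant through the attachment vertex, deposit a non-blob vertex elsewhere in the host, and return; your cut-vertex observation only forbids this when \emph{both} endpoints of an edge are already known to map into the pendant. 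If every vertex of $B_i$ happened to lie in a blob the confinement would indeed be immediate, but you neither establish this nor give an alternative argument, and you explicitly flag the step as ``to be argued carefully.'' Until it is argued, neither $F_j\nim F_k$ nor (IT4) is proved.

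It is worth noting that the paper's proof avoids needing full rigidity. For (IT4) it splits on whether the deletion/lifting data $V\cup E\cup S$ meets some $B_i$: if not, only path edges are removed and a component-counting argument (using that the blobs on either side of the cut form an antichain) kills $G_1$ or $G_2$; if so, it considers an immersion $\psi$ of $B_i$ into $F_j'$ and argues that either $\psi$ lands in the part $T$ of $F_j'$ arising from $B_i$ --- producing a proper self-immersion of $B_i$, contradicting Theorem~\ref{thm2} --- or some blob of $B_i$ escapes $T$ and must immerse into a different member of the antichain, which is impossible. The antichain property of $\mathscr{F}$ then falls out in three lines: a proper immersion $F_i\im F_j$ would, by (IT4), force $G_1\nim F_i$ or $G_2\nim F_i$, contradicting (IT3). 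So the paper gets both conclusions from the same dichotomy you are struggling to refine into a pointwise rigidity statement; if you want to keep your route, the missing confinement argument will essentially have to reproduce that dichotomy (self-immersion of $B_i$ versus an escaping blob) anyway.
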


\begin{proof}
Let $j$ be an integer.  
It is easy to see that $F_j$ satisfies (IT2) and (IT3).  
Therefore, in order to show that $F_j$ is an immersion intertwine of $G_1$ and $G_2$, it suffices to prove that it also satisfies (IT4).

Suppose, for contradiction, that $F'_j$ is a graph that is properly immersed in $F_j$ via a map $\varphi$, and both $G_1$ and $G_2$ are immersed in $F'_j$.
Then we can obtain $F'_j$ from $F_j$ by deleting a set of vertices $V$, deleting a set of edges $E$, and then lifting a liftable set of paths $S$, with at least one of these sets being nonempty.  We consider two cases depending on whether there is an integer $i$ for which $B_i$ meets $V\cup E \cup S$.

First, assume that no $B_i$ meets $V\cup E \cup S$.  
Then the sets $V$ and $S$ are empty, as all the vertices of $F_j$ are contained in the subgraphs $\{B_n\}_{n\in  \Z}$, and $E$ consists of some edges of $P$.

Suppose the edge~$e=\{v_k, v_{k+1}\}$ is in $E$ where $k$ is odd; the argument is symmetric when $k$ is even.
The graph $F_j \setminus e$ has exactly two components, with the subgraphs $B_k$ and $B_{k+2}$ in distinct components.
Label the component containing $B_k$ as $C_1$ and the component containing $B_{k+2}$ as $C_2$.

Let $A$ be a blob of $B_k$.
As $A$ and each blob of $C_2$ are members of the antichain $\mathscr{A}$, by Lemma~\ref{immersionblob}, we have $A\nim C_2$.  
Hence, by transitivity, $B_k\nim C_2$.  It follows similarly that $B_{k+2}\nim C_1$.
But as $G_1$ is connected and the only components of $F_j\setminus e$ are $C_1$ and $C_2$, we have that $G_1 \nim F_j \setminus e$.
Furthermore, as $F'_j \im F_j \setminus e$, by transitivity, $G_1 \nim F'_j$; a contradiction.

Now suppose that, for some odd integer $i$, the graph $B_i$ meets $V\cup E \cup S$; again, the argument is symmetric if $i$ is even.
As $G_1$ is immersed in $F'_j$, so is $B_i$.  Let $T$ be the subgraph of $F'_j$ induced by $\varphi^{-1} (V(B_i)\cup \mathscr{P}(B_i))$, and let $\psi$ be the immersion of $B_i$ into $F'_j$.
As $B_i$ admits no proper self-immersion, there must be some vertex $v$ of $B_i$ such that $\psi(v)$ is a vertex of $F'_j - T$.

Let $A_v$ be the blob of $B_i$ containing $v$.
By Lemma~\ref{immersionblob}, the blob $A_v$ is immersed in some blob of $F'_j - T$.  But, again by Lemma~\ref{immersionblob}, each blob of $F'_j - T$ is immersed in a graph of the antichain $\mathscr{A} \setminus \{A_v\}$.  So $A_v$ cannot be immersed in $F'_j - T$.  Therefore, $B_i$ is not immersed in $F'_j$ and neither is $G_1$.

Hence, $\mathscr{F}$ satisfies the condition (IT4).

To show that $\mathscr{F}$ is an antichain in $(\mathscr{G}_\infty, \im)$, suppose that $F_i$ is immersed in $F_j$ for some distinct integers $i$ and $j$. 
By construction, $F_i$ and $F_j$ are not isomorphic.  
Therefore, $F_i$ is properly immersed in the intertwine $F_j$ and so either $G_1 \nim F_i$ or $G_2 \nim F_i$.  
But both $G_1$ and $G_2$ are immersed in $F_i$ by construction; a contradiction.  The conclusion follows.
\end{proof}

The graphs $\{B_i\}_{i\in \Z}$ used in our construction, whose existence was proved in~\cite{oioug}, have vertex sets of very large cardinality.  In fact, the cardinal in question is the first limit cardinal greater than the cardinality of the continuum.  It is not known whether the class of graphs of smaller cardinality ordered by the strong immersion relation is a well-quasi-ordering, whether it has the finite intertwine property, and whether there exists a infinite graph of smaller cardinality that admits only the trivial self-immersion.

\end{document}